\documentclass[12pt]{amsart}

\usepackage{amsmath,amssymb,amscd,amsthm,graphicx,enumerate}

\usepackage{hyperref}
\hypersetup{breaklinks=true}

\numberwithin{equation}{section}
\setcounter{secnumdepth}{2}
\setcounter{tocdepth}{2}

\setlength{\parskip}{1ex}

\theoremstyle{plain}

\makeatletter
\newtheorem*{rep@theorem}{\rep@title}
\newcommand{\newreptheorem}[2]{%
\newenvironment{rep#1}[1]{%
 \def\rep@title{#2 \ref{##1}}%
 \begin{rep@theorem}}%
 {\end{rep@theorem}}}
\makeatother

\newtheorem{theorem}[equation]{Theorem}
\newreptheorem{theorem}{Theorem}

\newtheorem{corollary}[equation]{Corollary}

\theoremstyle{remark}

\theoremstyle{definition}

\newcommand{\R}{\mathbb R}

\newcommand{\al}{\alpha}

\newcommand{\D}{\partial}

\newcommand{\Lap}{\Delta}

\def\XXint#1#2#3{{\setbox0=\hbox{$#1{#2#3}{\int}$}
     \vcenter{\hbox{$#2#3$}}\kern-.5\wd0}}

\begin{document}

\title[Inscribed radius under mean curvature flow]{On Brendle's estimate for the inscribed radius under mean curvature flow}
\author{Robert Haslhofer}
\author{Bruce Kleiner}

\date{\today}

\begin{abstract}
In a recent paper \cite{Bre13}, Brendle proved that the inscribed radius of closed embedded mean convex hypersurfaces moving by mean curvature flow is at least $\frac{1}{(1+\delta)H}$ at all points with $H\geq C(\delta,M_0)$. In this note, we give a shorter proof of Brendle's estimate, and of a more general result for $\al$-Andrews flows, based on our recent estimates from Haslhofer-Kleiner \cite{HK13}.
\end{abstract}

\maketitle

\section{Introduction}
Let $\{M_t\subset \R^{n+1}\}$ be a family of closed embedded mean convex hypersurfaces that evolve by mean curvature flow. In a recent paper \cite{Bre13}, Brendle considered the inscribed radius $r_{\mathrm{in}}(p,t)$ and the outer radius $r_{\mathrm{out}}(p,t)$, i.e. the maximal radius of interior respectively exterior balls tangent at $p\in M_t$, and proved sharp estimates for them:

\begin{theorem}[{Brendle \cite[Thm. 1, Thm. 2]{Bre13}}]\label{cor}
Let $\{M_t\subset \R^{n+1}\}$ be a smooth family of closed embedded mean convex hypersurfaces which evolve by the mean curvature flow. Then, given any constant $\delta > 0$
there exists a constant $C=C(\delta,M_0)<\infty$ such that
\begin{equation}
r_{\mathrm{in}}(p,t) \geq \frac{1}{(1 + \delta) H(p,t)}\qquad \mathrm{and} \qquad r_{\mathrm{out}}(p,t)\geq \frac{1}{\delta H(p,t)},
\end{equation}
whenever $H(p,t) \geq C$.
\end{theorem}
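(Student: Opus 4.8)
The plan is to argue by contradiction and compactness, feeding in the noncollapsing, convexity and local curvature estimates of \cite{HK13} as black boxes. \emph{Reduction to $\al$-Andrews flows.} By Andrews' noncollapsing theorem a closed embedded mean convex flow is an $\al$-Andrews flow for some $\al=\al(M_0)>0$, and the $\al$-Andrews condition is preserved under parabolic rescaling; so it suffices to prove the scale-invariant statement that for every $n,\al,\de>0$ there is $C=C(n,\al,\de)<\infty$ such that $r_{\mathrm{in}}(p,t)H(p,t)\geq\tfrac{1}{1+\de}$ (and likewise $r_{\mathrm{out}}(p,t)H(p,t)\geq\tfrac1\de$) at every point of an $\al$-Andrews flow with $H\geq C$.

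\emph{The blow-up.} Suppose the inscribed radius estimate fails. Then there are $\al$-Andrews flows $\mathcal M^j$ and points $p_j$ with $H(p_j)\to\infty$ and $\mu_j:=r_{\mathrm{in}}(p_j)H(p_j)\to\mu\leq\tfrac{1}{1+\de}<1$; after parabolic rescaling I may take $H(p_j)=1$. Let $B_j$ be the inscribed ball at $p_j$. If $B_j$ touches $M^j_0$ only at $p_j$, then its radius is limited by the curvature there, so $\lambda_n(p_j)=1/\mu_j$; but the convexity estimate gives $\lambda_i\geq-\eta H-C_\eta$ on $\mathcal M^j$ for every $\eta>0$, which after rescaling yields $\lambda_n(p_j)=1-\sum_{i<n}\lambda_i(p_j)\leq 1+(n-1)\big(\eta+C_\eta/H(p_j)\big)$, hence $1/\mu=\lim_j\lambda_n(p_j)\leq 1+(n-1)\eta$ for all $\eta$, forcing $\mu\geq1$, a contradiction. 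Otherwise $B_j$ also touches $M^j_0$ at a second point $q_j$; then $B_j$ is an interior ball at $q_j$ as well, so $\lambda_n(q_j)\leq 1/\mu_j$, and with the convexity estimate $\abs{A}(q_j)$ is uniformly bounded, while $\abs{q_j-p_j}\leq 2\mu_j$ is bounded below (else one is back in the first case). The local curvature estimate of \cite{HK13} then gives uniform curvature and derivative bounds near both $p_j$ and $q_j$; combining this with noncollapsing (to keep the hypersurfaces from degenerating) and a point-selection argument, I would extract a subsequential limit which is an ancient $\al$-Andrews flow $\mathcal M^\infty$, convex by the convexity estimate, carrying the bad configuration — a point where $H=1$ and $r_{\mathrm{in}}=\mu<1$ whose inscribed ball is tangent at a second point.

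\emph{Ruling out the limit.} By Andrews' differential inequality for the inscribed radius, $1/r_{\mathrm{in}}$ is a viscosity subsolution of $(\D_t-\Lap)w=\abs{A}^2w$, while $H$ solves this equation; therefore $r_{\mathrm{in}}H=H/(1/r_{\mathrm{in}})$ is a supersolution of a linear parabolic equation with no zeroth order term, so $\min_{M^\infty_t}(r_{\mathrm{in}}H)$ is nondecreasing in $t$ and hence $\leq\mu<1$ for all $t\leq0$. On the other hand, rescaling $\mathcal M^\infty$ parabolically as $t\to-\infty$ produces a mean convex self-shrinker — by classification a round cylinder $S^k\times\R^{n-k}$ with $1\leq k\leq n$ (the hyperplane is excluded since $\mathcal M^\infty$ is non-flat) — on which $r_{\mathrm{in}}H\equiv k\geq1$; this forces $\min_{M^\infty_t}(r_{\mathrm{in}}H)\to k\geq1$ as $t\to-\infty$, contradicting the previous line. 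The outer radius estimate is easier: convex hypersurfaces have $r_{\mathrm{out}}=\infty$, and the convexity estimate alone forces $1/r_{\mathrm{out}}\leq\de H$ once $H$ is large, so an analogous (and shorter) argument applies.

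\emph{Main obstacle.} The crux is the two-contact-point case of the blow-up: the obstruction to a large inscribed ball lives a priori in the global geometry of $M^j_0$ (the location of the second contact point), so the point-selection must be arranged so that this obstruction both survives passage to the limit and coexists with the limit being an honest ancient flow; the subsequent $t\to-\infty$ analysis on a possibly noncompact $\mathcal M^\infty$ is the remaining technical burden. The one-contact-point case, by contrast, falls out of the convexity estimate directly and needs no limiting argument at all.
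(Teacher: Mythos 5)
Your overall strategy — reduce to $\al$-Andrews flows, rescale a contradicting sequence, use the compactness theory of \cite{HK13} to extract an ancient convex limit flow, and rule it out with the Andrews--Langford--McCoy viscosity inequality together with the cylindrical asymptotic shrinkers — is exactly the paper's strategy. But the step where you "rule out the limit" has a genuine gap. You fix $\de$, obtain a limit flow with one point where $r_{\mathrm{in}}H=\mu<1$, and then argue that the \emph{minimum} of $r_{\mathrm{in}}H$ over $M^\infty_t$ is nondecreasing in $t$ and must tend to the cylinder value $\geq 1$ as $t\to-\infty$. The second half of that sentence does not follow: $M^\infty_t$ is in general noncompact, and the points realizing $\inf_{M^\infty_t}(r_{\mathrm{in}}H)$ can escape to regions that the parabolic blow-down does not see (the blow-down only controls the flow after rescaling by $\sqrt{-t}$; think of the tip of a translating soliton, which is invisible to its blow-down). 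Showing $\inf_{M^\infty_t}(r_{\mathrm{in}}H)\to$ something $\geq 1$ is essentially the theorem you are trying to prove, restricted to ancient flows. The paper's proof is engineered precisely to avoid this: instead of a fixed $\de$ one takes $\delta_0$ to be the \emph{infimum} of the admissible constants, so that on the limit flow the quantity $\tfrac{1}{Hr_{\mathrm{in}}}$ attains its spacetime maximum $1+\delta_0$ at the basepoint $(0,0)$. The \emph{strong} maximum principle then forces $Hr_{\mathrm{in}}$ to be constant on the whole limit flow, and a single point captured by the blow-down (where the cylinder gives $Hr_{\mathrm{in}}\geq 1$) yields the contradiction. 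Your weak-maximum-principle version cannot be closed without this extremization, or some substitute for it.

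Two further points. First, your claim that "the convexity estimate alone forces $1/r_{\mathrm{out}}\leq\de H$" is false: $r_{\mathrm{out}}$ is a global quantity, and a pointwise lower bound on $\lambda_1$ says nothing about an exterior ball colliding with a distant sheet of the hypersurface (two nearly touching convex pieces have $\lambda_1>0$ everywhere but tiny $r_{\mathrm{out}}$). The outer radius estimate also requires the compactness argument: the limit flow is globally convex, hence has $r_{\mathrm{out}}=\infty$, and exterior balls of the limit are inherited by the approximators. Second, your case distinction (one versus two contact points) and the attendant point-selection are unnecessary overhead: the global convergence theorem \cite[Thm. 1.12]{HK13} applies directly to the rescaled flows based at $p_j$, and since the second contact point lies within distance $2\mu_j\leq 2$ of $p_j$ it automatically survives to the limit; the ALM inequality for $\tfrac{1}{Hr_{\mathrm{in}}}$ handles both contact configurations at once, which is the whole point of using it in viscosity form.
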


Brendle's proof is based on some quite sophisticated computations. The purpose of this note is to give a shorter proof of Brendle's estimate.

In fact, we prove a more general theorem (Theorem \ref{main_thm}), that also applies to certain local and weak solutions.
To describe our setting, recall that a mean curvature flow of mean convex hypersurfaces $M_t=\partial K_t$ satisfies the $\alpha$-Andrews condition \cite[Def. 1.1]{HK13}, if each boundary point $p\in\partial K_t$ admits interior and exterior balls tangent at $p$ of radius at least $\frac{\alpha}{H(p,t)}$; in other words if $\inf\{Hr_\mathrm{in},Hr_\mathrm{out}\}\geq \alpha$.
By the main theorem of Andrews \cite{And13} (which we extended to weak solutions in \cite[Thm. 1.5]{HK13}), the Andrews condition is preserved under the flow,
i.e. if $K_0$ satisfies the $\alpha$-Andrews condition, then so does $K_t$ for all $t\geq 0$.
The class of $\alpha$-Andrews flows \cite[Def. 1.6]{HK13}
is then defined as the smallest class of set flows
 which contains all compact $\al$-Andrews level set 
flows with smooth initial condition and all smooth $\al$-Andrews flows $\{K_t\subseteq U\}_{t\in I}$ in any open set $U\subseteq \R^{n+1}$, and which is closed under the operations of
restriction, parabolic rescaling, and passage to Hausdorff limits.

\begin{theorem}\label{main_thm}
For all $\delta>0$, $\al >0$, there exists $\eta=\eta(\delta,\al)<\infty$ with the following property.
If $\{K_t\}$ is an $\al $-Andrews flow in a parabolic ball  $P(p,t,\eta r)$ centered at a boundary point 
$p\in \D K_t$ with $H(p,t)\leq r^{-1}$, then
\begin{equation}
r_{\mathrm{in}}(p,t)\geq \frac{r}{1+\delta} \qquad \mathrm{and} \qquad r_{\mathrm{out}}(p,t)\geq \frac{r}{\delta}.
\end{equation} 
\end{theorem}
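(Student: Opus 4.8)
The plan is to argue by contradiction, using the local curvature estimate and the convexity estimate of \cite{HK13} to reduce to a limiting ancient flow on which both inequalities follow cleanly. Suppose the statement fails for some $\delta,\al>0$. Then there are $\al$-Andrews flows $\{K^j_t\}$ in parabolic balls $P(p_j,t_j,\eta_j r_j)$ with $\eta_j\to\infty$ and $H(p_j,t_j)\le r_j^{-1}$, such that for every $j$ either $r_{\mathrm{in}}(p_j,t_j)<\tfrac{r_j}{1+\delta}$ or $r_{\mathrm{out}}(p_j,t_j)<\tfrac{r_j}{\delta}$; pass to a subsequence so that the same alternative occurs for all $j$. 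Rescaling parabolically by $r_j^{-1}$ and translating $(p_j,t_j)$ to the spacetime origin, we may assume $r_j=1$ and $(p_j,t_j)=(0,0)$, so that $\{K^j_t\}$ is $\al$-Andrews in $P(0,0,\eta_j)$, $H(0,0)\le1$, and the failed inequality holds at the origin. By the Andrews condition $H(0,0)\min\{r_{\mathrm{in}}(0,0),r_{\mathrm{out}}(0,0)\}\ge\al$, combined with $H(0,0)\le 1$ and the failed inequality (which bounds the relevant radius from above), $H(0,0)$ lies in a fixed compact subinterval of $(0,\infty)$, so the limit will not be a hyperplane. The local curvature estimate of \cite{HK13} propagates $H(0,0)\le1$ to uniform bounds on $H$ and all its derivatives on each $P(0,0,R)$, once $\eta_j$ is large; hence a subsequence converges in $C^\infty_{\mathrm{loc}}$ to a smooth ancient $\al$-Andrews flow $\{K^\infty_t\}_{t\le0}$ with $0\in\D K^\infty_0$ and $H(0,0)>0$, and by the convexity estimate of \cite{HK13} each $K^\infty_t$ is convex. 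Passing the failed inequality to the limit gives $r_{\mathrm{in}}(0,0)\le\tfrac1{1+\delta}$, respectively $r_{\mathrm{out}}(0,0)\le\tfrac1\delta$.

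In the outer case this is already a contradiction: at any boundary point $q$ of the convex region $K^\infty_t$, with outer unit normal $\nu$, the region lies in the supporting halfspace $\{x:\langle x-q,\nu\rangle\le0\}$, so $B(q+R\nu,R)$ is tangent at $q$ and disjoint from $\mathrm{int}\,K^\infty_t$ for every $R>0$; thus $r_{\mathrm{out}}(q,t)=\infty$, contradicting $r_{\mathrm{out}}(0,0)\le\tfrac1\delta$.

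In the inscribed case it remains to prove that on every smooth convex ancient $\al$-Andrews flow the inscribed curvature $\lambda:=r_{\mathrm{in}}^{-1}$ satisfies $\lambda\le H$, i.e. $H\,r_{\mathrm{in}}\ge1$ everywhere; since $H(0,0)\le1$ this forces $r_{\mathrm{in}}(0,0)\ge1>\tfrac1{1+\delta}$, the desired contradiction. Note that $\lambda\le H$ is sharp, with equality on the shrinking cylinder $S^1\times\R^{n-1}$, so nothing stronger can be expected; it is precisely the gap between $\lambda(0,0)\ge1+\delta$ and $H(0,0)\le1$ that makes the contradiction work, and the same observation shows that it suffices to rule out $\lambda=H$ as well as $\lambda>H$.

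To prove $\lambda\le H$ on the ancient flow I would apply the maximum principle to $\lambda/H$, using the evolution inequality for the inscribed curvature due to Andrews \cite{And13} and Brendle \cite{Bre13}. Writing $\lambda(p,t)=\sup_{q}\frac{2\langle F(p)-F(q),\nu(p)\rangle}{|F(p)-F(q)|^2}$ and running the associated two-point maximum principle, the point is to show that at a spatial maximum of $\lambda/H$ with value $>1$ the reaction term in the evolution of $\lambda/H$ is strictly negative on a convex flow; since the flow is ancient there is no initial-time (that is, $M_0$-dependent) error term to absorb, so $\sup(\lambda/H)\le1$. Controlling this reaction term — equivalently, handling the unfavourable gradient term in the evolution of the inscribed radius — is the main obstacle, and is essentially the content of Brendle's computation in \cite{Bre13}. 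The role of \cite{HK13} in the present approach is that the local curvature estimate supplies the compactness needed to pass to an ancient flow and the convexity estimate makes that flow convex, so Brendle's delicate step is only required in the clean convex, ancient regime, where the lower-order terms responsible for the constant $C(\delta,M_0)$ in Theorem \ref{cor} are no longer present.
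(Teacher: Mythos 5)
Your reduction to a smooth ancient convex limit flow via the curvature, convexity and compactness estimates of \cite{HK13}, and your disposal of the outer-radius case using convexity of the limit, both match the paper. The gap is in the inscribed-radius case, which is the heart of the theorem: everything there rests on the claim that $\lambda\le H$ on a convex ancient $\al$-Andrews flow, and your justification of that claim is deferred to ``Brendle's computation'' of the sign of the reaction term in the two-point maximum principle for $\lambda/H$. That computation is exactly the sophisticated argument this paper is written to avoid, and you do not carry it out; nor is it clear that it transfers as stated, since Brendle's argument in \cite{Bre13} is for closed hypersurfaces and your limit flow may be noncompact (a cylinder, a translator, etc.), so the supremum of $\lambda/H$ over a time slice need not be attained, and the informal claim that ``since the flow is ancient there is no initial-time error term to absorb'' is not an argument — one would still need quantitative control to rule out an attained maximum decreasing at a small rate for all time. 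As written, the decisive step of your proof is an unproved assertion.

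The paper closes this gap by a different device that requires no reaction-term analysis at all. Instead of extracting a limit from an arbitrary failing sequence, one lets $\delta_0$ be the \emph{infimum} of the $\delta$'s for which the inscribed-radius assertion holds (finite, since the $\al$-Andrews condition already gives it for $\delta=\frac1\al-1$) and supposes $\delta_0>0$. A sequence realizing this infimum converges, by the global convergence theorem of \cite{HK13}, to an ancient convex flow on which $\frac{1}{Hr_{\mathrm{in}}}$ attains its global space-time maximum $1+\delta_0$ at $(0,0)$ — this attainment is the entire payoff of taking the infimum, and it is what your setup lacks. The Andrews--Langford--McCoy inequality $\partial_t\frac{1}{Hr_{\mathrm{in}}}\leq\Lap\frac{1}{Hr_{\mathrm{in}}}+2\langle\nabla\log H,\nabla\frac{1}{Hr_{\mathrm{in}}}\rangle$, valid in the viscosity sense, has no zeroth-order term, so the strong maximum principle at this interior space-time maximum forces $Hr_{\mathrm{in}}\equiv\frac{1}{1+\delta_0}$; since the asymptotic shrinkers of the limit are spheres or cylinders, for which $Hr_{\mathrm{in}}\geq 1$, this contradicts $\delta_0>0$. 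To repair your proposal, either adopt this infimum/strong-maximum-principle scheme, or actually prove the strict negativity of the reaction term for $\lambda/H>1$ on a (possibly noncompact) convex ancient flow together with an attainment argument — which would amount to redoing \cite{Bre13}.
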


 Note that Theorem \ref{main_thm} immediately implies Theorem \ref{cor}.
More generally, Theorem \ref{main_thm} shows that the scale invariant quantities $Hr_{\mathrm{in}}$ and $Hr_{\mathrm{out}}$ approach the optimal values $1$ respectively $\infty$, under the mere assumption that the flow is defined on a parabolic ball that is large enough compared to the curvature scale; the optimal value $1$ is attained for the shrinking cylinder $S^1\times\R^{n-1}$.
Philosophically, the assumption on the domain gives the maximum principle enough room to trigger in, and thus to improve the quantities $Hr_{\mathrm{in}}$ and $Hr_{\mathrm{out}}$ as much as allowed by the example of the cylinder. In particular, as another interesting consequence of Theorem \ref{main_thm}, we obtain:

\begin{corollary}\label{ancient}
Any ancient $\alpha$-Andrews flow $\{K_t\subset \R^{n+1}\}$ is in fact a $1$-Andrews flow.
\end{corollary}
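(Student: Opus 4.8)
The plan is to deduce Corollary \ref{ancient} directly from Theorem \ref{main_thm} by a scaling and limiting argument, exploiting the one feature that distinguishes ancient flows from general ones: every spacetime point of an ancient flow sits at the center of arbitrarily large parabolic neighborhoods on which the flow is still defined.

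Concretely, I would fix a boundary point $p\in\D K_t$ of the given ancient $\al$-Andrews flow $\{K_t\subset\R^{n+1}\}$ and first treat the case $H(p,t)>0$. Setting $r:=1/H(p,t)$, so that $H(p,t)=r^{-1}$, I observe that since the flow is ancient and defined on all of $\R^{n+1}$, the parabolic ball $P(p,t,\eta r)$ lies in the spacetime domain of $\{K_t\}$ for \emph{every} $\eta<\infty$; moreover, since the class of $\al$-Andrews flows is closed under restriction, the restriction of $\{K_t\}$ to $P(p,t,\eta r)$ is again an $\al$-Andrews flow. Thus for an arbitrary $\delta>0$ I may take $\eta=\eta(\delta,\al)$ from Theorem \ref{main_thm} and apply that theorem to the restricted flow, obtaining
\begin{equation*}
r_{\mathrm{in}}(p,t)\geq \frac{r}{1+\delta}=\frac{1}{(1+\delta)H(p,t)}\qquad\text{and}\qquad r_{\mathrm{out}}(p,t)\geq \frac{r}{\delta}=\frac{1}{\delta H(p,t)}.
\end{equation*}
Since $\delta>0$ is arbitrary, letting $\delta\to0$ gives $H(p,t)\,r_{\mathrm{in}}(p,t)\geq 1$ and $H(p,t)\,r_{\mathrm{out}}(p,t)=\infty$, hence $\inf\{Hr_{\mathrm{in}},Hr_{\mathrm{out}}\}\geq 1$ at $(p,t)$. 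At a boundary point with $H(p,t)=0$ the $\al$-Andrews assumption already forces $r_{\mathrm{in}}(p,t)=r_{\mathrm{out}}(p,t)=\infty$ (otherwise one of $Hr_{\mathrm{in}},Hr_{\mathrm{out}}$ would vanish, contradicting $\al>0$), so $K_t$ is a half-space there and the $1$-Andrews inequality holds trivially. Combining the two cases, $\{K_t\}$ satisfies $\inf\{Hr_{\mathrm{in}},Hr_{\mathrm{out}}\}\geq 1$ at every boundary point, i.e.\ it is a $1$-Andrews flow.

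The argument is short because all the analytic content is packaged inside Theorem \ref{main_thm}. The only point I expect to need care in the writeup is emphasizing that ancientness is genuinely essential: for a flow that is only defined on a finite time interval $(T_0,T]$, the admissible parabolic neighborhoods $P(p,t,\eta r)$ have $\eta$ bounded in terms of $(t-T_0)/r^2$, which obstructs the passage $\delta\to0$; for ancient flows this obstruction disappears. A minor secondary point is the bookkeeping at points where $H$ vanishes, and the observation that closure under restriction applies to the (possibly non-smooth) ancient $\al$-Andrews flow — both immediate from the definitions recalled above.
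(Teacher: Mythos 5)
Your proof is correct and is exactly the argument the paper intends: the corollary is stated as an immediate consequence of Theorem \ref{main_thm}, obtained by noting that an ancient flow in $\R^{n+1}$ contains the parabolic ball $P(p,t,\eta r)$ with $r=1/H(p,t)$ for every $\eta$, applying the theorem for each $\delta>0$, and letting $\delta\to 0$. Your handling of the points with $H=0$ and the appeal to closure under restriction are correct bookkeeping that the paper leaves implicit.
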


Corollary \ref{ancient} shows that $\alpha=1$ is the universal noncollapsing constant for ancient mean convex mean curvature flows.
It shares some similarities with Perelman's theorem \cite[1.5]{Per03} that any ancient $\kappa$-solution for three-dimensional Ricci flow is either a $\kappa_0$-solution or a quotient of the round sphere; our constant is the only sharp one, though.

Our proof of Theorem \ref{main_thm} is related to our proof of the convexity estimate in \cite[Thm. 1.10]{HK13}, see also White \cite{Whi03}. The point is, once the relevant compactness theorem is established (see \cite[Thm. 1.8]{HK13} and \cite[Thm. 1.12]{HK13} respectively) we can pass to a suitable limit and apply the strong maximum principle.

\section{The proof}

\begin{proof}[Proof of Theorem \ref{main_thm}]
As explained in \cite[Sec. 4]{HK13} it suffices to establish the estimate for smooth $\al$-Andrews flows; the estimate for general $\al$-Andrews flows then follows by approximation.

Fix $\al<1$. The $\al$-Andrews condition implies that the assertion for the inscribed radius holds for $\delta = \frac1\al-1 $. 
Let $\delta_0\leq \frac1\al-1 $ be the infimum of the $\delta$'s for which it holds, 
and suppose $\delta_0>0$.

It follows that there is a sequence $\{K_t^j\}$ of $\al$-Andrews flows, 
where for all $j$, $0\in \D K_0^j$, $H(0,0)\leq 1$ and  $\{K_t^j\}$ is defined in 
$P(0,0,j)$, but $r_{\mathrm{in}}(0,0)\to \frac{1}{1+\delta_0}$ as $j\to \infty$.
By the global convergence theorem \cite[Thm. 1.12]{HK13}, after passing to a subsequence, 
$\{K_t^j\}$  converges smoothly and globally to an ancient convex $\alpha$-Andrews flow $\{K_t^\infty\subset \R^{n+1}\}$. Then for $\{K_t^\infty\}$ we have $r_{\mathrm{in}}(0,0)= \frac{1}{1+\delta_0}$ and $H(0,0)=1$.

By construction, the quantity $\frac{1}{Hr_{\mathrm{in}}}$ attains its maximum over $\partial K_t^\infty$ for all $t\leq 0$ at $(0,0)$. By Andrews-Langford-McCoy \cite{ALM13} we have the evolution inequality
\begin{equation}
\partial_t\frac{1}{Hr_{\mathrm{in}}}\leq\Lap\frac{1}{Hr_{\mathrm{in}}}+2\langle\nabla \log H,\nabla\frac{1}{Hr_{\mathrm{in}}}\rangle,
\end{equation}
in the viscosity sense. Thus, by the strict maximum principle, $Hr_{\mathrm{in}}$ is constant. On the one hand, this constant is equal to $\frac{1}{1+\delta_0}$. On the other hand, recalling that the asymptotic shrinkers of $\{K_t^\infty\}$  must be cylinders or spheres \cite[Rem. 1.20]{HK13}, we see that this constant must be at least $1$; this gives the desired contradiction and proves the estimate for the inscribed radius.

Finally, the estimate for the outer radius follows from the convexity of ancient $\alpha$-Andrews flows \cite[Cor. 2.13]{HK13}.
\end{proof}

\vspace{10mm}
{\sc Courant Institute of Mathematical Sciences, New York University, 251 Mercer Street, New York, NY 10012, USA}

\emph{E-mail:} robert.haslhofer@cims.nyu.edu, bkleiner@cims.nyu.edu


\begin{thebibliography}{999999}
\bibitem[And13]{And13} B. Andrews, \textit{Noncollapsing in mean-convex mean curvature flow}, Geom. Topol. 16(3):1413--1418, 2012
\bibitem[ALM13]{ALM13} B. Andrews, M. Langford, J. McCoy. \textit{Noncollapsing in fully nonlinear curvature flows}, Ann. Inst. H. Poincare ANL, 30(1):23--32, 2013.
\bibitem[Bre13]{Bre13} S. Brendle, \textit{A sharp bound for the inscribed radius under mean curvature flow}, arXiv:1309.1459, 2013.
\bibitem[HK13]{HK13} R. Haslhofer, B. Kleiner, \textit{Mean curvature flow of mean convex hypersurfaces}, arXiv:1304.0926, 2013.
\bibitem[Per03]{Per03} G. Perelman, \textit{Ricci flow with surgery on three-manifolds}, arXiv:math/0303109, 2013.
\bibitem[Whi03]{Whi03} B. White, \textit{The nature of singularities in mean curvature flow of mean convex sets}, J. Amer. Math. Soc. 16(1):123--138, 2003.
\end{thebibliography}
\end{document}